\def\act#1#2%
\newcommand{\Z}{{\mathbb Z}}
\newcommand{\Q}{{\mathbb Q}}
\newcommand{\C}{{\mathbb C}}
\newcommand{\F}{{\mathbb F}}
\newcommand{\PP}{{\mathbb P}}
\DeclareMathOperator{\Gal}{Gal}
\DeclareMathOperator{\Pic}{Pic}
\newcommand{\id}{\operatorname{id}}
\newtheorem{Theorem}{Theorem}[section]
\newtheorem{Lemma}[Theorem]{Lemma}
\newtheorem{Definition}[Theorem]{Definition}
\newtheorem{Remark}[Theorem]{Remark}
\numberwithin{equation}{section}
\newcommand{\Frob}{\operatorname{Frob}}
\begin{document}

\title{Recovering algebraic curves from L-functions of Hilbert class fields}

\author{Jeremy Booher}
\address{School of Mathematics and Statistics, University of Canterbury, Private Bag 4800, Christchurch 8140, New Zealand}
\email{jeremy.booher@canterbury.ac.nz}
\urladdr{http://www.math.canterbury.ac.nz/\~{}j.booher}

\author{Jos\'e Felipe Voloch}
\address{School of Mathematics and Statistics, University of Canterbury, Private Bag 4800, Christchurch 8140, New Zealand}
\email{felipe.voloch@canterbury.ac.nz}
\urladdr{http://www.math.canterbury.ac.nz/\~{}f.voloch}

\begin{abstract}
In this paper, we prove that a smooth {hyperbolic} projective
curve over a finite field can be recovered from L-functions associated to the Hilbert class field of the curve and its constant field extensions. As a consequence, we give a new proof of a result of Mochizuki and Tamagawa that two such curves with isomorphic fundamental groups are themselves isomorphic.
\end{abstract}

\maketitle

\section{Introduction}

It has long been known that the zeta function of a global field does not
determine the field uniquely (e.g. for function fields, take isogenous, but non-isomorphic, elliptic curves). 
Recently, there has been work done to 
recover a global field from more refined invariants of a similar nature.
For example, \cite{CDL+} proves that a global field can be recovered
from the collection of all its abelian L-functions. Another example is 
the conjecture \cite[Conjecture 2.2]{SV} which predicts that the zeta functions 
of the Hilbert class field $H(C)$ and successive iterates $H(H(C)),\ldots$ determines an
algebraic curve $C$ over a finite field up to
Frobenius twist. %where $H(C)$ is the Hilbert class field of $C$, whose definition will be recalled below.
These two approaches are naturally related
to the classical work of Neukirch and Uchida of recovering global fields
from their absolute Galois groups \cite{Uchida}, and
the more recent work of Mochizuki and Tamagawa (see \cites{Moch,Tam}) of recovering algebraic curves
from their fundamental groups, respectively.

The purpose of this paper is twofold. First, we prove that a smooth proper curve of genus at least two (i.e. a proper hyperbolic curve) over a finite field can be recovered from L-functions associated to the Hilbert class field of the curve and its constant field extensions. 
Secondly, we show that two such curves with isomorphic fundamental groups are isomorphic.  This gives a new proof of the weak isomorphism version of the theorem of Mochizuki and Tamagawa mentioned earlier.
%Second, we give a new proof of the theorem of Mochizuki and Tamagawa with our method.
Our approach crucially depends on the work of Zilber \cites{Z1,Z2}, resolving a conjecture of Bogomolov, Korotiaev and Tschinkel \cite{BKT}.

\section{$L$-functions}

Let $q = p^a$ be a prime power, and $C$ be a smooth, projective, irreducible curve over $\F_q$ of genus at least one.  A divisor $D_1$ of degree one on $C$ gives an Abel-Jacobi embedding of $C$ into $J_C$ via $P \mapsto [P - D_1]$ on geometric points.  Throughout, we fix a choice of degree one divisor and Abel-Jacobi embedding for each curve; a degree-one divisor exists exists by \cite{Schmidt1931}.   

\begin{Definition}
Let $J_C$ be the Jacobian of $C$ and $\Phi: J_C \to J_C$ denote the $\F_q$-Frobenius map.  A Hilbert class field of $C$, with respect to a fixed Abel-Jacobi embedding of $C$ into $J_C$, is defined to be $H(C) := (\Phi-\id)^*(C) \subset J_C$.
\end{Definition}

The function field of $H(C)$ is a Hilbert class field of the function field of $C$ in the sense of class field theory  \cite[VIII.3]{AT}. Thus we retain the name even though we are considering the smooth projective curve throughout. The curve $H(C)$ is an unramified abelian cover of $C$ with
 Galois group $J_C(\F_q) =( \ker (\Phi - \id)(\F_q))$.
Note that changing $D_1$ twists $H(C)$.  Let $\mathcal{S}_C$ denote the set of places of $C$.

\begin{Definition}
Let $\chi : \Gal(H(C)/C) = J_C(\F_q) \to \C^*$ be a character.  For a place $P$ of $C$, let $\Frob_P \in \Gal(H(C)/C)$ denote the Frobenius at $P$. The $L$-series for $\chi$ is defined as
\[
L(t,C,\chi) := \prod_{P \in \mathcal{S}_C} ( 1 - \chi(\Frob_P) T^{\deg P})^{-1}.
\]
\end{Definition}

The following result is a special case of \cite[VI \S 5 Theorem 2]{Serre}.

\begin{Lemma}
For $P \in \mathcal{S}_C$, the Frobenius $\Frob_P$ is $[P - \deg(P) D_1] \in J_C(\F_q) = \Gal(H(C)/C)$, viewing $P$ as
a divisor of degree $\deg(P)$. 
\end{Lemma}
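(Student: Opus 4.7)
The plan is to compute $\Frob_P$ by lifting a geometric point of $C$ above $P$ to $H(C)\subset J_C$, iterating $\Phi$, and measuring the difference in $J_C(\F_q)$.

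First I would set up notation: let $\iota : C \to J_C$ denote the Abel--Jacobi embedding $P\mapsto [P-D_1]$, so by definition $H(C)=(\Phi-\id)^{-1}(\iota(C))$, and the Galois group $\Gal(H(C)/C)=\ker(\Phi-\id)=J_C(\F_q)$ acts on $H(C)$ by translation inside $J_C$. Fix a closed point $P$ of degree $d=\deg(P)$ and a geometric point $P^{\mathrm{geo}}\in C(\overline{\F_q})$ lying over it, so the $d$ geometric points above $P$ are $P^{\mathrm{geo}},\Phi(P^{\mathrm{geo}}),\ldots,\Phi^{d-1}(P^{\mathrm{geo}})$, and their sum as a divisor equals $P$.

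Next I would choose any lift $Q\in H(C)(\overline{\F_q})$ of $P^{\mathrm{geo}}$. By construction of $H(C)$, we have
\[
\Phi(Q)-Q=\iota(P^{\mathrm{geo}})=[P^{\mathrm{geo}}-D_1]\in J_C.
\]
Because $D_1$ is defined over $\F_q$, applying $\Phi^i$ to both sides gives
\[
\Phi^{i+1}(Q)-\Phi^i(Q)=[\Phi^i(P^{\mathrm{geo}})-D_1]
\]
for each $i\geq 0$. Telescoping from $i=0$ to $d-1$ yields
\[
\Phi^d(Q)-Q=\sum_{i=0}^{d-1}[\Phi^i(P^{\mathrm{geo}})-D_1]=[P-dD_1].
\]

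Finally, I would identify the left-hand side with $\Frob_P$. Since $\Phi^d$ fixes the closed point $P$, the point $\Phi^d(Q)$ lies in the same fiber of $H(C)\to C$ as $Q$, and $\Phi^d(Q)-Q$ lies in $\ker(\Phi-\id)=J_C(\F_q)$. Under the identification of this kernel with $\Gal(H(C)/C)$ acting by translation on $H(C)\subset J_C$, the element that sends $Q$ to $\Phi^d(Q)$ is exactly $\Frob_P$. Combining with the displayed equation gives $\Frob_P=[P-\deg(P)D_1]$, as claimed.

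The only subtle point, and the step I would be most careful about, is fixing conventions so that the Galois group action (translation by $J_C(\F_q)$) matches the decomposition-group/Frobenius convention with the correct sign; everything else is a direct telescoping computation using $\F_q$-rationality of $D_1$.
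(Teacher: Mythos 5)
Your argument is correct, and it is the standard one. Note that the paper does not actually prove this lemma --- it is quoted as a special case of Serre, \emph{Algebraic groups and class fields}, VI \S 5, Theorem 2 --- so your self-contained derivation via the Lang isogeny is exactly the computation underlying the cited result. The key points all check out: a lift $Q$ of $P^{\mathrm{geo}}$ exists by Lang's theorem (surjectivity of $\Phi-\id$ on $\overline{\F}_q$-points), the telescoping identity $\Phi^d(Q)-Q=[P-dD_1]$ uses only that $D_1$ is an $\F_q$-divisor, and the element $[P-dD_1]$ is indeed fixed by $\Phi$, hence lies in $J_C(\F_q)=\ker(\Phi-\id)$. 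The one point you rightly flag --- matching the translation action with the decomposition-group convention --- is resolved by the standard characterization of the \emph{arithmetic} Frobenius for an \'etale Galois cover over $\F_q$: $\Frob_P$ is the unique $\sigma$ in the Galois group with $\sigma(Q)=\Phi^{\deg P}(Q)$ for $Q$ above $P$, which under the translation action gives precisely $\Phi^d(Q)-Q$. So no sign ambiguity remains, and your proof could replace the citation verbatim.
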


We see that
\begin{equation} \label{eq:lfunction}
L(t,C,\chi) =\sum_{D\ge0}\chi([D-\deg(D)D_1])t^{\deg D},
\end{equation}
\noindent
where the sum is over effective divisors on $C$ and $[.]$ associates a divisor of degree zero to its divisor class in
the Jacobian. {Note that this again depends on the fixed choice of $D_1$}.

We let $\Frob$ denote the $q$th power map.  We refer to the $q^m$-th power map $\Frob^m$ as the  $\F_{q^m}$-Frobenius, and refer to an arbitrary element of $\Gal(\bar{\F}_q / \F_q)$ as a generalized Frobenius.  These field automorphisms induce isomorphisms of schemes over $\bar{\F}_q$, and in particular on the base change to $\bar{\F}_q$ of schemes over $\F_q$.  If $C$ is a curve defined over $\F_q$, then the Frobenius twist $\Frob^m(C)$ and the curve $C$ are isomorphic as schemes (via the map induced by the field automorphism $\Frob^m$), but not as $\F_q$-schemes. 

%We let $C$ and $C'$ be (smooth, projective, irreducible) curves over $\F_q$ of genus at least two in what follows.
Zilber \cites{Z1,Z2} resolved a conjecture of Bogomolov, Korotiaev and Tschinkel \cite{BKT}: 

\begin{Theorem}[{\hspace{1pt}\cite[Theorem 1.3]{Z2}}] \label{thm:zilber}
Let $C$ and $C'$ be smooth, projective, irreducible curves over $\F_q$ of genus at least two.  If  $\psi : J_C(\bar{\F}_q) \to J_{C'}(\bar{\F}_q)$ is a group isomorphism such that $\psi(C(\bar{\F}_q)) = C'(\bar{\F}_q)$ (with respect to the fixed Abel-Jacobi embeddings), then $\psi$ arises from a morphism of curves composed with a limit of Frobenius maps. 

 More precisely, there exists an integer $m$, an isomorphism $\alpha: \Frob^m(C) \simeq C'$, and a generalized Frobenius $\beta: \bar{\F}_q \to \bar{\F}_q$ restricting to $\Frob^{-m}$ on $\F_q$ such that $\psi$ is the map on Jacobians induced by $\alpha \circ \beta : C \to C'$. 
\end{Theorem}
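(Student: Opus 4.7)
The approach is to bootstrap from the set-theoretic hypothesis to an algebraic one, in several stages. First, I would exploit that $\psi$ is a group homomorphism preserving $C(\bar{\F}_q)$ to deduce preservation of a large family of constructible subsets of the Jacobian: for every $n \ge 1$, applying $\psi$ to sumsets yields $\psi(W_n(\bar{\F}_q)) = W'_n(\bar{\F}_q)$, where $W_n \subset J_C$ and $W'_n \subset J_{C'}$ are the images of the $n$-fold symmetric products of $C$ and $C'$ under addition. Taking $n = g-1$ matches the theta divisors set-theoretically, while taking $n$ large forces the Jacobians themselves to match; combined with $\ell$-torsion counts for $\ell \ne p$, this already yields $g = g'$.

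Second, I would use these incidence relations, together with the group structure, to control how $\psi$ interacts with the Galois action of $\Gal(\bar{\F}_q / \F_q)$. Since $J_C(\bar{\F}_q)$ is torsion, $\psi$ induces for each prime $\ell \ne p$ a $\Z_\ell$-linear isomorphism of Tate modules $T_\ell \psi \colon T_\ell J_C \to T_\ell J_{C'}$. The crucial and hardest point is to show that $T_\ell \psi$ is Galois-equivariant up to a single fixed element $\gamma \in \Gal(\bar{\F}_q / \F_q)$. The preservation of the curve, combined with the stratification by the $W_n$, should rigidify $\psi$ enough to intertwine the Frobenius actions up to such a twist, eventually producing the integer $m$ and the generalized Frobenius $\beta$ in the statement.

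Third, once $\psi$ is known to be Galois-compatible up to the twist by $\beta \circ \Frob^{-m}$, I would invoke Tate's theorem on homomorphisms of abelian varieties over finite fields to promote the Galois-equivariant map of Tate modules to an actual morphism of the Jacobians (of the appropriate Frobenius twists), and then apply Torelli's theorem for curves of genus at least two, using the preserved theta divisor as a principal polarization, to descend this to an isomorphism $\alpha \colon \Frob^m(C) \simeq C'$ whose composition with $\beta$ recovers $\psi$ on $\bar{\F}_q$-points.

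The main obstacle, and what makes this theorem deep, is the second step: bridging from purely set-theoretic preservation of the curve and its sumsets to the rigidity statement that $\psi$ respects the Zariski topology up to a field automorphism. Classical tools such as tangent spaces, formal neighbourhoods, and \'etale cohomology cannot ``see'' a bijection of $\bar{\F}_q$-points directly, so some genuinely new input is required. This is precisely the role of Zilber's machinery of \emph{Zariski geometries}, which exploits the highly constrained combinatorics of definable subsets of the structure $(J_C(\bar{\F}_q), +, C(\bar{\F}_q))$ to force algebraic rigidity; without such an input I do not see how to close the gap between a group-theoretic and an algebraic isomorphism.
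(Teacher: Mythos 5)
This statement is not proved in the paper at all: it is Zilber's theorem, imported verbatim as \cite[Theorem 1.3]{Z2} (resolving the conjecture of Bogomolov, Korotiaev and Tschinkel \cite{BKT}), so there is no in-paper argument to compare yours against. Judged on its own terms, your proposal is a reasonable reconstruction of the outer architecture one would expect --- match the sumset strata $W_n$ and the theta divisor, induce a map on Tate modules, establish Galois-equivariance up to a generalized Frobenius, then invoke Tate's isogeny theorem and Torelli --- but it is not a proof. Your second step \emph{is} the theorem: the passage from an abstract group isomorphism that preserves $C(\bar{\F}_q)$ only as a set to a map compatible with the Galois (equivalently, algebraic) structure up to a twist. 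You concede explicitly that you cannot close this gap without Zilber's Zariski-geometry machinery, and you reproduce none of it; phrases like ``should rigidify $\psi$ enough'' are carrying the entire weight of the argument.

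Two further points, even granting that step. First, Tate's theorem would only give you an element of $\Hom(J_{\Frob^m(C)}, J_{C'}) \otimes \Z_\ell$ from a Galois-equivariant isomorphism of $\ell$-adic Tate modules; extracting an actual isomorphism of abelian varieties that carries theta divisor to theta divisor (as Torelli requires), controlling the $p$-primary part of $\psi$ which the prime-to-$p$ Tate modules do not see, and handling the sign and translation ambiguities in Torelli all need additional argument. Second, the conclusion is finer than ``$C \simeq C'$ as schemes'': it asserts that $\psi$ itself equals the map induced by $\alpha \circ \beta$ on $\bar{\F}_q$-points, so at the end you must check pointwise agreement with the given set-theoretic $\psi$, not mere existence of $\alpha$. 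Since the authors treat this result as external input, the correct move here is the one they make: cite Zilber rather than attempt a proof.
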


% has shown that if $\psi : J_C(\bar{\F}_q) \to J_{C'}(\bar{\F}_q)$ is an isomorphism of groups such that $\psi(C(\bar{\F}_q)) = C'(\bar{\F}_q)$, then $\psi$ arises from a morphism of curves composed with a limit of Frobenius maps.  
%More precisely, there exists an integer $m$, an isomorphism $\alpha: \Frob^m(C) \simeq C'$, and a generalized Frobenius $\beta: \bar{\F}_q \to \bar{\F}_q$ restricting to $\Frob^{-m}$ on $\F_q$ such that $\alpha \circ \beta : C \to C'$ induces $\psi$. \jeremy{Added}

We use this to prove our first main result.

\begin{Theorem}
\label{main}
Let $C,C'$ be smooth projective curves of genus at least two
over a finite field $\F_q$. 
Suppose $\psi : J_C(\bar{\F}_q) \to J_{C'}(\bar{\F}_q)$ is a set-theoretic map
inducing an isomorphism of groups between $J_C(\F_{q^n})$ and $J_{C'}(\F_{q^n})$ for every $n \ge 1$.
%If $L(t,C\otimes\F_{q^n},\chi)$ for all $n$ and all corresponding characters $\chi$ and the corresponding 
%$L$-function for $C'$ under $\psi$ match, 
If $$L(t, C \otimes \F_{q^n}, \chi) = L(t, C' \otimes \F_{q^n}, \chi \circ \psi|_{J_C(\F_{q^n})})$$ for all $n$ and all characters $\chi$ of $J_C(\F_{q^n})$, then $C$ and $C'$ are Frobenius twists of each other.  

%a generalized Frobenius $\beta : \bar{\F}_q \to \bar{\F}_q$ restricting to $\Frob^{-m}$ on $\F_q$ such that $\psi$ is the composition
%\[
%J_C(\bar{\F}_q) \overset{\beta} \to J_{\Frob^m(C)} (\bar{\F}_q) \overset{\alpha} \to J_{C'}(\bar{\F}_q).
%\]

%%%%%%%%%%
%In particular, 
%there exists an isomorphism of curves $\alpha: \Frob^m(C) \simeq C'$ and 
%and a morphism $\beta : C \to \Frob^m C$ arising from an automorphism of $\bar{\F}_q$ restricting to $\Frob^{-m}$ on $\F_q$ such that $\alpha \circ \beta$ induces $\psi$ on the the geometric points of the Jacobians.
% \jeremy{added a sentence}
%%%%%%%%
%Several issues with the above sentence, hence commented out. Need to define $\Frob$ and it's unclear to me what $\beta$
%is. Plus we never use these again.
%%%%%%%%

\end{Theorem}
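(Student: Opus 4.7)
The plan is to use equation \eqref{eq:lfunction} together with orthogonality of characters to recover, from the L-function data on $C_n := C \otimes \F_{q^n}$, the image of $C(\F_{q^n})$ under the Abel-Jacobi embedding inside $J_C(\F_{q^n})$; to do the same for $C'$; and then to invoke Theorem \ref{thm:zilber}.

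The extraction proceeds as follows. For fixed $x \in J_C(\F_{q^n})$, orthogonality of characters applied to \eqref{eq:lfunction} gives
\[
\frac{1}{|J_C(\F_{q^n})|} \sum_{\chi} \chi(x)^{-1} L(t, C_n, \chi) \;=\; \sum_{\substack{D \ge 0 \\ [D - \deg(D)D_1] = x}} t^{\deg D},
\]
where $\chi$ runs over characters of $J_C(\F_{q^n})$ and $D$ runs over effective divisors of $C_n$. The coefficient of $t$ counts the $P \in C(\F_{q^n})$ with $[P - D_1] = x$, which by injectivity of the Abel-Jacobi embedding (genus at least one) is $1$ if $x$ is the Abel-Jacobi image of some $\F_{q^n}$-point of $C$ and $0$ otherwise. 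Thus the family $\{L(t, C_n, \chi)\}_\chi$ determines the Abel-Jacobi image of $C(\F_{q^n})$ as a subset of $J_C(\F_{q^n})$.

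Running the same computation on $C'_n$ at $\psi(x) \in J_{C'}(\F_{q^n})$, and using that $\psi|_{J_C(\F_{q^n})}$ is a group isomorphism (so the induced bijection of character groups is exactly the one implicit in the hypothesis), the assumed equality of L-functions forces $\psi$ to carry the Abel-Jacobi image of $C(\F_{q^n})$ bijectively onto that of $C'(\F_{q^n})$. Taking the union over $n$ yields $\psi(C(\bar{\F}_q)) = C'(\bar{\F}_q)$ inside the respective Jacobians, and $\psi$ is a group homomorphism on $J_C(\bar{\F}_q)$ since any two elements lie in some common $J_C(\F_{q^n})$ where $\psi$ is additive by hypothesis. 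Theorem \ref{thm:zilber} then produces an integer $m$ and an $\F_q$-isomorphism $\Frob^m(C) \simeq C'$, exhibiting $C'$ as a Frobenius twist of $C$.

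The step requiring most care is the orthogonality extraction: one must verify that the fixed $\F_q$-divisors $D_1, D_1'$ interact correctly with Abel-Jacobi on $\F_{q^n}$-points and that the identification of character groups via $\psi$ is precisely the pairing appearing in the L-function hypothesis. Once the correspondence $\psi(C(\bar{\F}_q)) = C'(\bar{\F}_q)$ is established, the remainder is a direct appeal to Zilber's theorem and no further geometric input is needed.
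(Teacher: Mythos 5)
Your proposal is correct and matches the paper's own argument: both use orthogonality of characters on \eqref{eq:lfunction} to recover the counts of effective divisors in each class, identify the Abel--Jacobi image of $C(\F_{q^n})$ via the degree-one coefficient (using that distinct points on a curve of positive genus are not linearly equivalent), and then conclude with Theorem~\ref{thm:zilber}. The only cosmetic difference is that the paper spells out the case $n=1$ and notes the general case is identical, while you run the argument for all $n$ at once.
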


\begin{proof} 
We show that $\psi(C(\F_{q^n})) = C'(\F_{q^n})$ for all $n \geq 1$ 
under our hypotheses, in order to apply Zilber's theorem.
%to obtain the desired $\alpha$ and $\beta$. \jeremy{edited}
It suffices to treat the case $n=1$ as the same arguments applied to $C \otimes \F_{q^n}$ and $C' \otimes \F_{q^{n}}$ yield the general result. Using \eqref{eq:lfunction} we write
$$L(t,C,\chi) = 
\sum_{x \in J_C(\F_q)} \chi(x)\sum_{d=0}^{\infty} \#\{D\ge 0: D\sim x +dD_1\} t^d.$$

Knowledge of all these $L$-functions for all characters $\chi$ give us
the values of $\#\{D\ge 0: D\sim x +dD_1\}$ for each $x \in J_C(\F_q), d \ge0$. Finally $x + D_1 \sim P, P \in C(\F_q)$ if and only if 
$\#\{D\ge 0: D\sim x +D_1\} = 1$. Indeed, if $x + D_1$ is not linearly
equivalent to a point then $\#\{D\ge 0: D\sim x +D_1\} = 0$ and if it is, then
$\#\{D\ge 0: D\sim x +D_1\} = 1$ since the curve has positive genus so
distinct points are not linearly equivalent.  This shows that $\psi(C(\F_{q^n})) = C'(\F_{q^n})$; applying Theorem~\ref{thm:zilber} completes the proof. 
\end{proof}

\begin{Remark}
The proof actually gives more.  The proof only uses that the derivatives of $L(t,C \otimes \F_{q^n},\chi)$ and of $L(t, C' \otimes \F_{q^n}, \chi \circ \psi|_{J_C(\F_{q^n})})$ at $t=0$ are equal for all $n$ and $\chi$. 
Secondly, the proof shows there is {an integer $m$,} an isomorphism $\alpha : \Frob^m(C) \to
C'$, and a generalized Frobenius $\beta : \bar{\F}_q \to \bar{\F}_q$
restricting to $\Frob^{-m}$ on $\F_q$ such that $\psi$ is the composition
\[
J_C(\bar{\F}_q) \overset{\beta} \to J_{\Frob^m(C)} (\bar{\F}_q)
\overset{\alpha} \to J_{C'}(\bar{\F}_q).
\]
\end{Remark}

\begin{Remark}
The theorem fails for genus one. In this case, for any non-trivial character $\chi$ we have $L(t,C\otimes\F_{q^n},\chi) =1$, and for
the trivial character one gets the zeta function of $C\otimes\F_{q^n}$, which only detects the isogeny class of $C$. If we have 
two elliptic curves with isomorphic endomorphism rings (which exist if the class number of the ring is at least two), then
results of Kohel \cite[Propositions 21 and 22]{Kohel} imply the existence of two isogenies
of coprime orders between the elliptic curves (which are their own Jacobians) and, in particular we have an isomorphism of groups between $C(\F_{q^n})$ and $C'(\F_{q^n})$ for every $n \ge 1$ inducing equality of all corresponding $L$-functions.
\end{Remark}

\begin{Remark}
It is not generally enough to have $J_C(\F_{q})$ and $J_{C'}(\F_{q})$ isomorphic and the corresponding $L$-functions matching (i.e.
the conditions only for $n=1$). To see this, let $C/\F_3$ be a curve of genus two with non-constant map of degree two $\pi :C \to \PP^1$ such that:
\begin{itemize}
\item  $C$ has no rational or quadratic Weierstrass points,
\item  only one rational point of $\PP^1$ splits in $C$ under $\pi$,
\item  $J_C(\F_{3})$ has order $5$.
\end{itemize}
\cite[Example 4.1]{SV} provides
two non-isomorphic curves satisfying these conditions.  But these conditions will determine $L(t,C,\chi)$ for each character $\chi : J_C(\F_3) \to \C^*$, providing the desired example.

The conditions imply one fiber of $\pi$ consists of rational points and the other three fibers of $\pi$
consist of pairs of quadratic points. Let $C(\F_3) = \{P,Q\}$. The fibers give linearly equivalent effective divisors of degree two. The divisors $2P, 2Q$ are also effective divisors of degree two but are not equivalent to the fibers or to each other. As 
$J_C(\F_{3})$ has order $5$ there are two more classes of divisors of degree two, each represented by a single effective
divisor consisting of conjugate quadratic points. So $\#C(\F_3) = 2, \#C(\F_9) = 12$ and the zeta function of $C$ (which is
the $L$-function with trivial character) is determined. 

We use $D_1 = P$ as the divisor of degree $1$ embedding $C$ in $J_C$.  For divisors of degree one, $P \mapsto 0$ and $Q \mapsto Q-P$; notice the latter generates $J_C(\F_{3})$.
For divisors of degree $2$, we have $D \mapsto D -2P$.  Thus $P+Q \mapsto Q-P$ and likewise for the other fibers of
$\pi$ as they are linearly equivalent.
 Furthermore $2P \mapsto 0$ and $2Q \mapsto 2(Q-P)$.  The two other effective divisors of degree $2$ map to
$3(Q-P),4(Q-P)$ respectively. Since we know that $L(t,C,\chi)$ is a quadratic polynomial, if $\chi \ne 1$ this is enough information to compute it.  In particular, if $\chi(Q-P) = \zeta$ then using \eqref{eq:lfunction} we have
$$L(t,C,\chi) = 1 + (1+\zeta)t + (4\zeta +1+ \zeta^2+ \zeta^3 + \zeta^4)t^2 =  1 + (1+\zeta)t + 3\zeta t^2.$$

\end{Remark}

\begin{Remark}
All of the $L$-functions appearing in the theorem can be realized as (non-abelian)
$L$-functions over the base field {up to a harmless change of variable}. Recall that for a Galois extension of global fields $L/K$ with Galois group $G$, a subgroup $H \subset G$ with $K' = L^H$, and a representation $\rho$ of $H$, we have that (note the change of variable
to $s$, such that $t=q^{-s}$ in the function field case).
\[
L(s,L/K',\rho) = L(s, L/K,\operatorname{Ind}^G_H(\rho)).
\]
Let $K = \F_q(C)$ and $L$ be the function field of the cover of $C$ corresponding to the cover of $J_C$ given by $(\Frob^n - \id)^*$; note this cover includes an extension of the constant field when $n>1$.   We see that for a character $\chi$ of $J_C(\F_{q^n})$, 
\[
L(t^n,C \otimes {\F_{q^n}}, \chi) = L(t,C, \operatorname{Ind}^{\Gal(L/K)}_{J_C(\F_{q^n})} (\chi)).
\]
{The equality of $L$-functions in Theorem~\ref{main} can be checked after the change of variable $t \mapsto t^n$, so can be checked using L-functions over the base field.}
\end{Remark}

\section{Fundamental groups}

%Given a curve $C$ over $\F_q$ with fixed divisor of degree one, we let $\pi_1(C)$ denote the étale fundamental group of $C$ with respect the fixed divisor.  
Recall the fundamental exact sequence of \'etale fundamental groups

\begin{equation}
\label{fund}
1 \rightarrow \pi_1(\bar{C}) \rightarrow \pi_1(C) \rightarrow G_{\F_q} \rightarrow 1,
\end{equation}
where $\bar{C} = C\otimes\bar{\F}_q$ and $G_{\F_q}$ is the absolute Galois group of $\F_q$.  We use the notation $p_C: \pi_1(C) \rightarrow G_{\F_q}$ for the right map in the above sequence.
 
As mentioned in the introduction, we give a new proof of the following theorem.
 
\begin{Theorem}[Mochizuki--Tamagawa]
\label{moch}
 Let $C,C'$ be smooth projective curves of genus at least two
over a finite field $\F_q$. 
If there is an isomorphism $\pi_1(C) \simeq \pi_1(C')$ of groups % inducing the identity on $G_{\F_q}$ via $p_C,p_{C'}$, 
then $C$ is isomorphic to $C'$ as schemes.
\end{Theorem}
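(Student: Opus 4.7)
The plan is to extract from the isomorphism $\sigma : \pi_1(C) \xrightarrow{\sim} \pi_1(C')$ precisely the hypotheses of Theorem~\ref{main}: a set-theoretic map $\psi : J_C(\bar{\F}_q) \to J_{C'}(\bar{\F}_q)$ inducing group isomorphisms on $\F_{q^n}$-rational points for every $n \geq 1$, together with the $L$-function equalities. Theorem~\ref{main} then yields that $C$ and $C'$ are Frobenius twists of each other, and since a Frobenius twist is an isomorphism of schemes (though not of $\F_q$-schemes), this is exactly what is needed.

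The first step is to show that $\sigma$ respects the fundamental exact sequence~\eqref{fund}, so that it restricts to an isomorphism $\pi_1(\bar{C}) \xrightarrow{\sim} \pi_1(\overline{C'})$ and descends to an automorphism of $G_{\F_q} \simeq \hat{\Z}$. This relies on a group-theoretic characterization of $\pi_1(\bar{C})$ inside $\pi_1(C)$ for hyperbolic curves over finite fields: one identifies the quotient $G_{\F_q}$ intrinsically (via its unique structure as a torsion-free pro-cyclic quotient compatible with the open-subgroup data) and takes $\pi_1(\bar{C})$ to be its kernel. Applying this to every open subgroup of finite index of the form $\pi_1(C \otimes \F_{q^n}) \subset \pi_1(C)$ yields compatible isomorphisms $\sigma_n : \pi_1(C \otimes \F_{q^n}) \xrightarrow{\sim} \pi_1(C' \otimes \F_{q^n})$ for every $n$.

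Next, by class field theory, the quotient of $\pi_1(C \otimes \F_{q^n})^{\mathrm{ab}}$ classifying the Hilbert class field cover $H(C \otimes \F_{q^n}) \to C \otimes \F_{q^n}$ is canonically $J_C(\F_{q^n})$, namely the kernel of the degree map to $G_{\F_{q^n}} \simeq \hat{\Z}$, and this description is intrinsic. Hence $\sigma_n$ induces group isomorphisms $\psi_n : J_C(\F_{q^n}) \xrightarrow{\sim} J_{C'}(\F_{q^n})$, and these assemble compatibly (as $n$ varies) into a set-theoretic map $\psi : J_C(\bar{\F}_q) \to J_{C'}(\bar{\F}_q)$. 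A character $\chi$ of $J_C(\F_{q^n})$ inflates to a one-dimensional representation of $\pi_1(C \otimes \F_{q^n})$, whose $L$-function is computed from the conjugacy classes of Frobenius elements at closed points together with their degrees---both of which are intrinsic to $\pi_1(C \otimes \F_{q^n})$ and hence preserved by $\sigma_n$. This yields the $L$-function equalities required in Theorem~\ref{main}, which we then invoke to conclude.

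The main obstacle will be the group-theoretic reconstruction in the first step, identifying $\pi_1(\bar{C})$ inside $\pi_1(C)$ abstractly as a group. In characteristic $p$ the geometric fundamental group is not topologically finitely generated, so standard rank-based characterizations require care, and one typically works prime-by-prime with $\ell$-adic completions for $\ell \ne p$, or uses cohomological Frobenius-weight data. A secondary subtlety is that the induced automorphism of $\hat{\Z}$ need not be the identity, but this affects the $L$-function matching only through an automorphism of the character group, which is absorbed into the definition of $\psi$; since Theorem~\ref{main} requires the equality for all $n$ and all $\chi$ at once, no loss occurs.
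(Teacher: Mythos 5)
Your overall strategy --- extract $\psi$ from class field theory applied to $\pi_1(C\otimes\F_{q^n})^{\mathrm{ab}}$, verify the hypotheses of Theorem~\ref{main}, and conclude via Frobenius twists --- is the same as the paper's, and your first two steps essentially match it. (The paper handles both of the difficulties you flag in step one by citing Tamagawa's result \cite[Proposition 3.4]{Tam} that the Frobenius \emph{element} of $G_{\F_q}$ is group-theoretically determined by $\pi_1(C)$; this pins down the quotient $p_C$, forces the induced automorphism of $G_{\F_q}$ to be the identity, and identifies the subgroups $\pi_1(C\otimes\F_{q^n})$ in one stroke.)

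The third step, however, has a genuine gap. You assert that the $L$-function of $\chi$ is computed from ``the conjugacy classes of Frobenius elements at closed points together with their degrees --- both of which are intrinsic to $\pi_1(C \otimes \F_{q^n})$ and hence preserved by $\sigma_n$.'' That intrinsicness claim is exactly the hard point and is not justified: knowing which elements of $\pi_1(C\otimes\F_{q^n})$ are Frobenius elements of closed points amounts to a group-theoretic identification of the decomposition subgroups, which is a deep anabelian input (Tamagawa's characterization of the sections arising from points, \cite[Proposition 0.7]{Tam}), not something an abstract isomorphism preserves for free. Indeed, already in the abelian quotient, knowing the Frobenius classes of the degree-one points is the same as knowing the image of $C(\F_{q^n})$ inside $J_C(\F_{q^n})$, which is essentially the conclusion one is after --- at that point Zilber's theorem applies directly and the $L$-functions are superfluous. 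If you do supply this input, your argument becomes the alternative proof of Stix sketched in the paper's final remark, bypassing Theorem~\ref{main}. The paper's own route around the difficulty is the key point of its proof and is different: it computes $L(t,C\otimes\F_{q^n},\chi)$ cohomologically, as the characteristic polynomial of $\Phi^n$ on the $\chi$-eigencomponent of the Tate module of the Jacobian of $X_n = H(C\otimes\F_{q^n})$. Since $\pi_1(X_n)$ is the kernel of $\pi_1(C\otimes\F_{q^n}) \to \pi_1(C\otimes\F_{q^n})^{\mathrm{ab,tor}} = J_C(\F_{q^n})$, and the Tate module together with its Frobenius and $J_C(\F_{q^n})$-actions is read off from $\pi_1(\overline{X_n})^{\mathrm{ab}}$ sitting inside the relevant extension, this really is data preserved by the group isomorphism. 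You would need to replace your point-counting description of the $L$-function by this cohomological one (or else prove the decomposition-group reconstruction) to close the gap.
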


\begin{proof}
First, it is already known that $\pi_1(C)$ group-theoretically determines the Frobenius element in $G_{\F_q}$ \cite[Proposition 3.4]{Tam}. Thus 
 the isomorphism $\pi_1(C) \simeq \pi_1(C')$ induces the identity on $G_{\F_q}$ via $p_C$ and $p_{C'}$ and induces isomorphisms $\pi_1(C \otimes \F_{q^n}) \simeq \pi_1(C' \otimes \F_{q^n})$ for each $n \geq 1$.

Next, for $n \geq 1$ there is a short exact sequence 
\begin{equation}
0 \to J_C(\F_{q^n}) \to \widehat{\Pic_C(\F_{q^n})} \to \widehat{\Z} \to 0.
\end{equation}
obtained by taking the profinite completion of the short exact sequence arising from the degree map $\Pic_C(\F_{q^n}) \to \Z$.
Class field theory identifies $\widehat{\Pic_C(\F_{q^n})}$ with $\pi_1(C \otimes \F_{q^n})^{\rm{ab}}$ and the degree map with $p_{C \otimes \F_{q^n}}$.   It follows that $\pi_1(C \otimes \F_{q^n})^{\rm{ab,tor}}$ is identified with $J_C(\F_{q^n})$. 
The (group-theoretic) transfer map $\pi_1(C)^{\rm{ab}} \to \pi_1(C \otimes \F_{q^n})^{\rm{ab}}$ is compatible with the Artin map, and induces the inclusion map $J_C(\F_q) \to J_C(\F_{q^n})$ \cite[VII.8]{serrelocal}.
Therefore the isomorphism $\pi_1(C) \simeq \pi_1(C')$ determines
isomorphisms $\pi_1(C \otimes \F_{q^n}) \simeq \pi_1(C' \otimes \F_{q^n})$ and hence
 isomorphisms $\psi_n : J_C(\F_{q^n}) \simeq J_{C'}(\F_{q^n})$.  For $m \geq 1$, $\psi_n$ and $\psi_{nm}$ are compatible with the inclusion maps $J_C(\F_{q^n}) \to J_C(\F_{q^{nm}})$ and $J_{C'}(\F_{q^n}) \to J_{C'}(\F_{q^{nm}})$, and hence we obtain a bijection $\psi : J_C(\overline{\F}_q) \to J_{C'}(\overline{\F}_q)$ as in Theorem~\ref{main}.

%As remarked in \cite[Lemma 3.8]{ST2009}, we know $J_C(\F_{q^n}) = \pi_1(C\otimes\F_{q^n})^{\rm{ab,tor}}$, the torsion of the 
%abelianisation of $\pi_1(C\otimes\F_{q^n}) = p_C^{-1}(G_{\F_{q^n}})$. 
%Thus there are isomorphism between $J_C(\F_{q^n})$ and $J_{C'}(\F_{q^n})$. 

Now let $J_X$ be the Jacobian of a curve $X$ defined over a finite field $k$ with fixed Abel-Jacobi embedding.  Analogously to \eqref{fund}, there is a fundamental exact sequence %there is an analogous fundamental exact sequence for $J_X$
\begin{equation}
1 \rightarrow \pi_1(\overline{J_X}) \rightarrow \pi_1(J_X) \rightarrow G_{k} \rightarrow 1.
\end{equation}
Furthermore $\pi_1(\overline{J_X})$ is the abelianisation of $\pi_1(\bar{X})$.  Thus
the prime-to-$p$ part 
of $\pi_1(\overline{J_X})$ is the product of the Tate modules of $J_X$.
The fundamental exact sequence describes the Galois action on the
Tate module, and so the Tate module with its Galois action is determined by $\pi_1(\overline{J_X})$ (equivalently by $\pi_1(\bar{X})^{\rm{ab}}$ using the fixed Abel-Jacobi embedding). 

We apply this with $k = \F_{q^n}$ and $X := X_n = H( C \otimes \F_{q^n})$.  Let $\Phi$ denote the $\F_q$-Frobenius on $J_C$.
For characters $\chi$ of $J_C(\F_{q^n})$, the $L(t,C\otimes\F_{q^n},\chi)$ are the characteristic polynomials of $\Phi^n$ in the
$\chi$-eigencomponent of the Tate module of $J_{X_n}$.  Notice that $\pi_1(X_n)$ is determined group-theoretically by $\pi_1(C)$ as it is the kernel of the morphism $\pi_1(C\otimes\F_{q^n}) \to \pi_1(C \otimes \F_{q^n})^{\rm{ab,tor}} = J_C (\F_{q^n}) $ and we already know that $\pi_1(C\otimes\F_{q^n})$ is determined group-theoretically.
Then the previous paragraph applies, showing that the Tate modules and hence $L$-functions for $C$ and $C'$ agree.  
Theorem \ref{main} shows that $C$ and $C'$ are Frobenius twists, and hence are isomorphic as schemes. 
\end{proof}

\begin{Remark}
Theorem~\ref{moch} shows how to recover projective curves from their fundamental groups.   
It is known to experts that the affine case follows relatively easily.
Historically the argument went in the other direction; Mochizuki proved the projective case building on work of Tamagawa which treated affine curves \cites{Moch,Tam}. 
\end{Remark}

\begin{Remark}
We do not need the full data of an isomorphism $\pi_1(C) \simeq \pi_1(C')$ in order to apply Theorem~\ref{main}.  In particular, we only need two pieces of information for each $n \geq 1$:
\begin{itemize}
\item an isomorphsim $\pi_1(C \otimes \F_{q^n})^{\rm{ab}} \simeq \pi_1(C' \otimes \F_{q^n})^{\rm{ab}}$;
\item isomorphisms $\pi_1(X_{n,\bar{\F}_q}) ^{\rm{ab}} \simeq \pi_1(X'_{n,\bar{\F}_q})^{\rm{ab}}$ that are compatible with the action by the Frobenius and actions by $J_C(\F_{q^n})$ and $J_{C'}(\F_{q^n})$. 
\end{itemize}
  The first identifies $J_C(\F_{q^n}) \simeq J_{C'}(\F_{q^n})$ and similarly for geometric points.  The latter isomorphisms, or even just an isomorphism of the abelian pro-$\ell$ fundamental groups for any prime $\ell \neq p$, suffices to identify the $L$-functions.  In particular, it identifies $H^1(X_{n, \bar{\F}_q}, \Q_\ell) \simeq H^1(X'_{n, \bar{\F}_q}, \Q_\ell)$ with an action of $J_C(\F_{q^n}) \simeq \Gal(X_n / C \otimes \F_{q^n})$ and of Frobenius.  The $L$-function is the characteristic polynomial of the $\F_{q^n}$-Frobenius on the $\chi$-component of the cohomology $H^*(X_{n, \bar{\F}_q}, \Q_\ell)$.   
   In particular, an isomorphism between the ``geometrically abelian by abelian pro-$\ell$'' fundamental groups suffices. 
The required actions on $\pi_1(X_{n,\bar{\F}_q}) ^{\rm{ab}}$ are encoded in this group via conjugation by lifts of Frobenius and by lifts of elements of $J_C(\F_{q^n})$ through $\pi_1(C \otimes \F_{q^n}) \to \pi_1(C \otimes \F_{q^n})^{\rm{ab}}$.
   \end{Remark}

% This needs a stronger form of the theorem, that there is an isomorphism of C and C' inducing the isomorphism of fundamental groups
%\begin{Remark}
%Jakob Stix informs us that Theorem~\ref{moch} can be strengthened using ideas from \cite{Tam}.  First, the hypothesis that $\gamma$ induces the identity on $G_{\F_q}$ via $p_C$ and $p_{C'}$ is superfluous as the $\F_q$-Frobenius in $G_{\F_q}$ is characterized group-theoretically and hence must be preserved by $\gamma$ \cite[Proposition 3.4]{Tam}.  Secondly, using \cite[Proposition 3.5, 3.7]{Tam} one may extend the result to affine curves of genus at least two by applying Theorem~\ref{moch} to their smooth completions $X$ and $X'$, and again to \'{e}tale covers of $C$ (respectively $C'$) ramified above every point of $X-C$ (respectively $X'-C'$).  
%In contrast, Mochizuki's approach to Theorem~\ref{moch} used Tamagawa's work on the affine case. 
%\end{Remark}

\begin{Remark}
Jakob Stix suggested an alternative way to deduce Theorem \ref{moch} from Zilber's theorem without going through
Theorem \ref{main}. As in the proof of Theorem \ref{moch}, construct the map $\psi$ between the points of 
the respective Jacobians. To prove that $\psi$ induces a bijection between the points of the respective curves, 
Stix uses Tamagawa's \cite[Proposition 0.7]{Tam} characterization of those sections of the fundamental exact sequence that come
from rational points on the curve. He shows that the composition of a section $G_{\F_q} \rightarrow \pi_1(C)$ coming
from a point with the isomorphism $\pi_1(C) \to \pi_1(C')$, yields a section $G_{\F_q} \rightarrow \pi_1(C')$ also coming
from a point and similarly over $\F_{q^n}$. Moreover, the resulting map $C(\overline{\F}_q) \to C'(\overline{\F}_q)$ is
compatible with the map $J_C(\overline{\F}_q) \to J_{C'}(\overline{\F}_q)$, at which point Zilber's theorem applies.
\end{Remark}

\section*{Acknowledgements}
The authors were supported by the Marsden Fund Council administered by the Royal Society of New Zealand. They would also like to thank Jakob Stix {and the referees} for many helpful comments.

\begin{bibdiv}

\begin{biblist}

\bib{AT}{book}{
AUTHOR = {Artin, Emil},
AUTHOR = {Tate, John},
     TITLE = {Class field theory},
      NOTE = {Reprinted with corrections from the 1967 original},
 PUBLISHER = {AMS Chelsea Publishing, Providence, RI},
      YEAR = {2009},
     PAGES = {viii+194},
}

\bib{BKT}{article}{
   author={Bogomolov, Fedor},
   author={Korotiaev, Mikhail},
   author={Tschinkel, Yuri},
   title={A Torelli theorem for curves over finite fields},
   journal={Pure Appl. Math. Q.},
   volume={6},
   date={2010},
   number={1, Special Issue: In honor of John Tate.},
   pages={245--294},
   issn={1558-8599},
   %review={\MR{2591191}},
   %doi={10.4310/PAMQ.2010.v6.n1.a7},
}

\bib{CDL+}{article}{
AUTHOR = {Cornelissen, Gunther},
author={de Smit, Bart},
author={Li, Xin},
author={Marcolli, Matilde}, 
author={Smit, Harry},
     TITLE = {Characterization of global fields by {D}irichlet {$L$}-series},
   JOURNAL = {Res. Number Theory},
    VOLUME = {5},
      YEAR = {2019},
    NUMBER = {1},
     PAGES = {Art. 7, 15},
}

\bib{Kohel}{book}{
    AUTHOR = {Kohel, David Russell},
     TITLE = {Endomorphism rings of elliptic curves over finite fields},
      NOTE = {Thesis (Ph.D.)--University of California, Berkeley},
 PUBLISHER = {ProQuest LLC, Ann Arbor, MI},
      YEAR = {1996},
     PAGES = {117},
     }

\bib{Moch}{article}{
AUTHOR = {Mochizuki, Shinichi},
     TITLE = {Absolute anabelian cuspidalizations of proper hyperbolic
              curves},
   JOURNAL = {J. Math. Kyoto Univ.},
 % FJOURNAL = {Journal of Mathematics of Kyoto University},
    VOLUME = {47},
      YEAR = {2007},
    NUMBER = {3},
     PAGES = {451--539},    
     }
     
%\bib{Pila}{article}{
%AUTHOR = {Pila, J.},
%TITLE = {Frobenius maps of abelian varieties and finding roots of unity
%              in finite fields},
%   JOURNAL = {Math. Comp.},
%    VOLUME = {55},
%      YEAR = {1990},
%    NUMBER = {192},
%     PAGES = {745--763},
%}

%\bib{Poonen}{article}{
%author={Poonen, Bjorn},
%title={Using zeta functions to factor polynomials over finite fields}, 
%note={preprint, arxiv:1710.00970, Proceedings of AGCT 16, AMS Contemporary Mathematics, to appear.}
%Preprint, 2017.
%}
\bib{ST2009}{article}{
   author={Sa\"{\i}di, Mohamed},
   author={Tamagawa, Akio},
   title={On the anabelian geometry of hyperbolic curves over finite fields},
   conference={
      title={Algebraic number theory and related topics 2007},
   },
   book={
      series={RIMS K\^{o}ky\^{u}roku Bessatsu, B12},
      publisher={Res. Inst. Math. Sci. (RIMS), Kyoto},
   },
   date={2009},
   pages={67--89},
%   review={\MR{2605774}},
}

\bib{Schmidt1931}{article}{
author = {Schmidt, F. K.},
journal = {Mathematische Zeitschrift},
pages = {1--32},
title = {Analytische Zahlentheorie in K\"orpern der Charakteristik $p$},
volume = {33},
year = {1931},
}

\bib{Serre}{book}{
AUTHOR = {Serre, Jean-Pierre},
     TITLE = {Algebraic groups and class fields},
    SERIES = {Graduate Texts in Mathematics},
    VOLUME = {117},
     % NOTE = {Translated from the French},
 PUBLISHER = {Springer-Verlag, New York},
      YEAR = {1988},
     PAGES = {x+207}
     }
     
     \bib{serrelocal}{book}{
   author={Serre, Jean-Pierre},
   title={Local fields},
   series={Graduate Texts in Mathematics},
   volume={67},
   note={Translated from the French by Marvin Jay Greenberg},
   publisher={Springer-Verlag, New York-Berlin},
   date={1979},
   pages={viii+241},
   isbn={0-387-90424-7},
   review={\MR{554237}},
}

\bib{SV}{article}{
author={Sutherland, Andrew V.},
author={Voloch, Jos\'e Felipe},
title={Maps between curves and arithmetic obstructions},
note={preprint, arxiv:1709.05734, Proceedings of AGCT 16, AMS Contemporary Mathematics, to appear.},
}

\bib{Tam}{article}{
AUTHOR = {Tamagawa, Akio},
     TITLE = {The {G}rothendieck conjecture for affine curves},
   JOURNAL = {Compositio Math.},
 % FJOURNAL = {Compositio Mathematica},
    VOLUME = {109},
      YEAR = {1997},
    NUMBER = {2},
     PAGES = {135--194},
     }

\bib{Uchida}{article}{
AUTHOR = {Uchida, K\^{o}ji},
     TITLE = {Isomorphisms of {G}alois groups of algebraic function fields},
   JOURNAL = {Ann. of Math. (2)},
  %FJOURNAL = {Annals of Mathematics. Second Series},
    VOLUME = {106},
      YEAR = {1977},
    NUMBER = {3},
     PAGES = {589--598},
     }
     
\bib{Z1}{article}{
AUTHOR = {Zilber, Boris},
     TITLE = {A curve and its abstract {J}acobian},
   JOURNAL = {Int. Math. Res. Notices},
      YEAR = {2014},
    NUMBER = {5},
     PAGES = {1425--1439},
}

\bib{Z2}{article}{
AUTHOR = {Zilber, Boris},
TITLE = {A curve and its abstract {J}acobian},
NOTE = {Corrected version of \cite{Z1}, preprint, http://people.maths.ox.ac.uk/zilber/JacobianCor.pdf},
YEAR = {2017},
}

\end{biblist}
\end{bibdiv}

%----------------------------------------------------------------------
%\end{thebibliography}
%----------------------------------------------------------------------

\end{document}